\theoremstyle{definition}
\newtheorem{theorem}{Theorem}[section]
\newtheorem*{acknowledgement}{Acknowledgments}
\newtheorem{lemma}[theorem]{Lemma}
\newtheorem{proposition}[theorem]{Proposition}
\title{The automorphism group of the group of unitriangular matrices over a field}
\author{Ayan Mahalanobis}
\address{Indian Institute of Science Education and Research Pune, Sai
  Trinity Building, Pashan, Pune 411021, INDIA}
\email{ayanm@iiserpune.ac.in}
\thanks{Research supported by a NBHM grant}
\begin{document}
\begin{abstract}
This paper finds the generators of the automorphism group of the group
of unitriangular matrices over a field. Most of this paper is an
exposition of the work of V.M. Lev\u{c}huk, part of which is in
Russian. Some proofs are of my own.
\end{abstract}
\maketitle
\section{Introduction}
The automorphism group of the group of unitriangular matrices over a
field was studied by many~\cite{weir,levchuk2,maginnis}. In this
direction, the first paper was in Russian, published by Pavlov in
1953. Pavlov studies the automorphism group of unitriangular matrices
over a finite field of
odd prime order. Weir~\cite{weir} describes the automorphism group of
the group of unitriangular matrices over an finite field of odd
characteristic. Maginnis~\cite{maginnis} describes it for the field of
two elements and finally Lev\u{c}huk~\cite{levchuk2} describes the
automorphism group of the group of unitriangular matrices over an arbitrary ring. 
 In this
expository article, we will study the automorphism group of the group of unitriangular matrices over an arbitrary field $\mathbf{F}$.

To start, we denote the algebra of all lower niltraingular matrices over $\mathbf{F}$, of
size $d$, by NT$(d,\mathbf{F})$. This is the set of all matrices that
have zero on and above the diagonal, and arbitrary field element
(possible non-zero) below the main diagonal. It is known to be a
\emph{nilpotent algebra}, $M^d=0$, for all
$M\in\text{NT}(d,\mathbf{F})$. Where $0$ is the zero matrix of size
$d$. The general method, that we describe below, works only when $d$ is
greater than 4. The case of $d=3$ and $d=4$ can be computed by hand
and was done by Lev\u{c}huk~\cite{levchuk2}. Henceforth, we assume
that $d\geq 5$.

One can define two operations on the set NT$(d,\mathbf{F})$. 
\begin{description}
\item The first operation is $\centerdot$, defined as $a\centerdot b=a+b+ab$.
\item The second operation is $\ast$, defined as $a\ast b=ab-ba$.
\end{description}
It is known that $\left(\text{NT}(d,\mathbf{F}), \centerdot\right)$ is isomorphic to UT$(d,\mathbf{F})$, the group of (lower) \emph{unitriangular matrices}. The isomorphism being $x\mapsto 1+x$, where 1 is the identity matrix of size $d$. This groups is also known as the \textit{associated group} of the ring NT$(d,\mathbf{F})$. In this paper, we will denote the associated group of NT$(d,\mathbf{F})$ by UT$(d,\mathbf{F})$.

The second operation is a Lie bracket, it is known that
$\left(\text{NT}(d,\mathbf{F}),+,\ast\right)$ is a Lie algebra. In
this paper, we will denote this Lie algebra by
NT$^\ast(d,\mathbf{F})$. It is not hard to see, in the light of
Equations~\ref{grpeqn2} and~\ref{lieeqn} later, that this Lie algebra is the
same as the graded Lie Algebra of the group UT$(d,\mathbf{F})$.

For $i>j$ and $x\in\mathbf{F}$, we define the matrix unit $xe_{i,j}$ to be the $d\times d$
matrix with $x$ in the $(i,j)$ position and $0$ everywhere else.

The \emph{defining relations} in these three algebraic objects are the relations in
the field $\mathbf{F}$ and the following:
\paragraph{The algebra NT$(d,\mathbf{F})$}
\begin{equation}\label{ringeq}
(xe_{i,j})(ye_{k,l})=\left\{
\begin{array}{ccc}
xye_{i,l} &\text{whenever}&j=k\\
0&\text{otherwise}
\end{array}
\right.
\end{equation}
\paragraph{The group UT$(d,\mathbf{F})$}
\begin{eqnarray}
(xe_{i,j})\centerdot(ye_{i,j})=(x+y)e_{i,j}\label{grpeqn1}\\
{[}xe_{i,j},ye_{k,l}{]}=\left\{
\begin{array}{ccr}
xye_{i,l}&\text{whenever}&j=k\\
-xye_{k,j}&\text{whenever}&i=l\\
0&\text{otherwise}
\end{array}\right.\label{grpeqn2}
\end{eqnarray}
\paragraph{The Lie algebra NT$^\ast(d,\mathbf{F})$}
\begin{eqnarray}\label{lieeqn}
(xe_{i,j})\ast(ye_{k,l})=\left\{
\begin{array}{ccr}
\;\; xye_{i,l}&\text{whenever}&j=k\\
-xye_{k,j}&\text{whenever}&i=l\\
0&\text{otherwise}
\end{array}\right.
\end{eqnarray}

From the Relation~\ref{grpeqn2}, it follows, that a set of
generators for the UT$(d,\mathbf{F})$, is of the form $xe_{i+1,i}$,
$x\in\mathbf{F}$ and $i=1,2,\ldots,d-1$. This is actually a set of
\emph{minimal} generators. Since the commutator relation and the
relation in the Lie algebra are the same, the same set
acts as generators for the Lie algebra as well.

Define
\[\Gamma_k=\left\{M=\sum m_{i,j}e_{i,j}\in\text{UT}(d,\mathbf{F});\;\;
  m_{i,j}=0,\;\;i-j<k\right\},\] in other words, the
$\Gamma_1=\text{UT}(d,\mathbf{F})$. The subgroup $\Gamma_2$ is the commutator of
UT$(d,\mathbf{F})$. It consists of all lower niltriangular matrices
with the first subdiagonal entries zero. The first subdiagonal can be
specified by all entries $(i,j)$ with $i-j=1$. Similarly $\Gamma_2$
consists of all matrices with the first two subdiagonals zero and so
on. It follows that $\Gamma_d=0$.

It follows from Relation~\ref{grpeqn2}, if $i-j=k_1$ and $k-l=k_2$
and $[xe_{i,j},ye_{k,l}]$ is non-zero, then the commutator is
$xye_{i,l}$ or $xye_{k,j}$. In both these cases, $i-l$ or $k-j$ equals
$k_1+k_2$. Taking these into account, one can prove the next
proposition.
\begin{proposition}
In UT$(d,\mathbf{F})$, the lower central series and the upper central
series are identical and is of the form $\text{UT}(d,\mathbf{F})=\Gamma_1>\Gamma_2>\ldots >\Gamma_{d-1}>\Gamma_d=0$.
\end{proposition}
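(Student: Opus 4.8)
The plan is to show that the one chain $\text{UT}(d,\mathbf{F})=\Gamma_1>\Gamma_2>\cdots>\Gamma_d=0$ is at once the lower central series and, written with its largest term first, the upper central series. Write $\gamma_k$ and $Z_k$ for the terms of the lower and of the upper central series; the goal is $\gamma_k=\Gamma_k=Z_{d-k}$ for all $k$. The first step is that each $\Gamma_k$ is a normal subgroup and $[\Gamma_i,\Gamma_j]\subseteq\Gamma_{i+j}$. Closure of $\Gamma_k$ under $a\centerdot b=a+b+ab$ and under $\centerdot$-inverses (which, by nilpotency, are the finite sums $-a+a^2-a^3+\cdots$) is immediate from Relation~\ref{ringeq}, because the product of two matrix units whose row minus column is at least $k$ is either $0$ or a matrix unit whose row minus column is at least $2k$. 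For the bracket estimate, if $g=1+A\in\Gamma_i$ and $h=1+B\in\Gamma_j$ then a direct expansion gives $[g,h]=1+g^{-1}h^{-1}(AB-BA)$, and by Relation~\ref{ringeq} every nonzero entry of $AB$ and of $BA$ sits at least $i+j$ below the diagonal; since the matrices so supported form a two-sided ideal of $\text{NT}(d,\mathbf{F})$ we get $[g,h]\in\Gamma_{i+j}$, and in particular $[\Gamma_k,\text{UT}(d,\mathbf{F})]\subseteq\Gamma_{k+1}$. Thus $(\Gamma_k)$ is a central series, and the standard comparison of an arbitrary central series with the lower and the upper central series yields the inclusions $\gamma_k\subseteq\Gamma_k$ and $Z_k\supseteq\Gamma_{d-k}$ for free.

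For the reverse inclusion $\gamma_k\supseteq\Gamma_k$ I need two things. First, $\Gamma_k$ is generated, as a group, by the matrix units $1+xe_{i,j}$ with $i-j\ge k$, $x\in\mathbf{F}$: since $a\centerdot b=a+b+ab$ with $ab\in\Gamma_{2k}$ for $a,b\in\Gamma_k$, the quotient $\Gamma_k/\Gamma_{k+1}$ is the additive group of its $k$-th subdiagonal and hence generated by the images of those units, so $\Gamma_k=\langle\,1+xe_{i,j}:i-j=k\,\rangle\cdot\Gamma_{k+1}$; recursing and using $\Gamma_d=0$ gives the claim. Second, one checks from Relation~\ref{grpeqn2}, using $(xe_{i,j+1})^2=(e_{j+1,j})^2=0$, the identity $[\,1+xe_{i,j+1},\,1+e_{j+1,j}\,]=1+xe_{i,j}$ whenever $i-j\ge 2$. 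Now induct on $k$: $\Gamma_1=\gamma_1$, and assuming $\Gamma_k\subseteq\gamma_k$, each generator $1+xe_{i,j}$ of $\Gamma_{k+1}$ has $\ell:=i-j\ge k+1\ge 2$, so $1+xe_{i,j+1}\in\Gamma_{\ell-1}\subseteq\Gamma_k\subseteq\gamma_k$, whence $1+xe_{i,j}\in[\gamma_k,\text{UT}(d,\mathbf{F})]=\gamma_{k+1}$; thus $\Gamma_{k+1}\subseteq\gamma_{k+1}$. This gives $\gamma_k=\Gamma_k$ for every $k$, and in particular $\gamma_2=\Gamma_2$ is the derived subgroup.

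It remains to prove $Z_k\subseteq\Gamma_{d-k}$, by induction on $k$; for $k=0$ this is $Z_0=1=\Gamma_d$. Assume $Z_k=\Gamma_{d-k}$ and take $1+N\in Z_{k+1}$, so $1+N$ is central modulo $\Gamma_{d-k}$; suppose toward a contradiction that $N\notin\Gamma_{d-k-1}$, and let $m$ be the least subdiagonal carrying a nonzero entry of $N$, so that $m\le d-k-2$ and hence $\Gamma_{d-k}\subseteq\Gamma_{m+2}$. Writing $L$ for the part of $N$ on the $m$-th subdiagonal, the identity of the first paragraph (together with $(e_{i+1,i})^2=0$ and the fact that $AB-BA$ here lies in $\Gamma_{m+1}$) gives $[\,1+N,\,1+e_{i+1,i}\,]=1-(e_{i+1,i}\ast L)+(\text{a matrix in }\Gamma_{m+2})$; since $e_{i+1,i}\ast L$ lies purely on the $(m+1)$-st subdiagonal, the hypothesis $[\,1+N,\,1+e_{i+1,i}\,]\in\Gamma_{d-k}\subseteq\Gamma_{m+2}$ forces $e_{i+1,i}\ast L=0$ for all $i=1,\dots,d-1$. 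Expanding $L=\sum_{a-b=m}\ell_{a,b}e_{a,b}$ in Relation~\ref{lieeqn}, the simultaneous vanishing of all these brackets forces $\ell_{a,b}=0$ unless $a=d$ and $b=1$, i.e.\ unless $m=d-1$, contradicting $m\le d-k-2<d-1$. Hence $Z_{k+1}=\Gamma_{d-k-1}$, so the upper central series, written with its largest term first, is exactly $\Gamma_1>\Gamma_2>\cdots>\Gamma_d=0$; and this chain is strictly decreasing because $1+e_{k+1,1}\in\Gamma_k\setminus\Gamma_{k+1}$ for $1\le k\le d-1$ while $\Gamma_d=0$. The one genuinely non-formal point---the reason one cannot merely quote the graded Lie algebra---is the bookkeeping of the higher-order terms that distinguish a group commutator from the Lie bracket $\ast$, which is exactly what makes the centre computation above valid; the generation statement for $\Gamma_k$ is routine, and everything else follows formally once $[\Gamma_i,\Gamma_j]\subseteq\Gamma_{i+j}$ is in hand.
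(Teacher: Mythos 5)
Your proof is correct, and it follows the same underlying idea the paper merely gestures at before stating the proposition: degree additivity of commutators of matrix units, giving $[\Gamma_i,\Gamma_j]\subseteq\Gamma_{i+j}$, together with realizing each unit $1+xe_{i,j}$ as a commutator of shallower units. The paper leaves all details to the reader (including the upper-central-series half, where one must check that nothing outside $\Gamma_{d-k-1}$ is central modulo $\Gamma_{d-k}$), and your writeup supplies them accurately.
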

There is an interesting and useful connection between the normal subgroups of
UT$(d,\mathbf{F})$ and ideals of NT$^\ast(d,\mathbf{F})$. The connection can be motivated by a simple observation: Let $1+L$ is in UT$(d,\mathbf{F})$, i.e., $L\in\text{NT}(d,\mathbf{F})$.
Then \begin{equation}\left(1+e_{ij}\right)^{-1}(1+L)\left(1+e_{ij}\right)=1+L+\left(L\ast e_{ij}\right),\end{equation} which implies that under suitable conditions, elements in a normal subgroup of UT$(d,\mathbf{F})$ are closed under Lie bracket. Conversely, under suitable condition, an ideal of NT$^\ast(d,\mathbf{F})$ is a  normal subgroup of UT$(d,\mathbf{F})$.

Furthermore one should also notice, if a subgroup $H$ of UT$(d,\mathbf{F})$ is abelian, then we have $(1+L)(1+M)=(1+M)(1+L)$ for $1+L,1+M\in H$; which implies that $L\ast M=0$, i.e., if a subgroup is abelian and an ideal, then that ideal is abelian as well and vice versa.

Notice that, in the motivation above, we have represented an element of the group UT$(d,\mathbf{F})$ as $1+L$, where $L\in\text{NT}(d,\mathbf{F})$. This is not necessary, we can use $L$ and the operation $\centerdot$. However, since $1+L$ makes the group operation  matrix multiplication, this makes our motivation transparent. From now on, elements in UT$(d,\mathbf{F})$ will be represented as elements of NT$(d,\mathbf{F})$, with the operation $\centerdot$.   

For $i>j$, let us define $\mathbf{N}_{i,j}$ to be the subset of NT$(d,\mathbf{F})$ with all rows less then the $i\textsuperscript{th}$ row zero and all columns greater than the $j\textsuperscript{th}$ column zero. It is a rectangle and Weir~\cite{weir} calls it a \emph{partition} subgroup. It is straightforward to see that $\mathbf{N}_{i,j}$ is an abelian (two-sided) ideal of the ring NT$(d,\mathbf{F})$. From this it follows that $\mathbf{N}_{i,j}$ is an abelian ideal of NT$^\ast(d,\mathbf{F})$.
\begin{lemma}\label{lem1}
If $H$ is a maximal abelian normal subgroup of UT$(d,\mathbf{F})$ or a
maximal abelian ideal of NT$^\ast(d,\mathbf{F})$, then 
\begin{itemize}
\item $H^2\subseteq H$.
\item $H^2\subseteq\mathbf{N}_{d,1}$.
\item $\alpha\gamma\beta+\beta\gamma\alpha=0$ for $\alpha,\beta\in H$
  and $\gamma\in\text{NT}(d,\mathbf{F})$.
\end{itemize}
\end{lemma}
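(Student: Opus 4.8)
The plan is to work entirely inside $\text{NT}(d,\mathbf{F})$ with ordinary matrix multiplication, using the correspondence indicated above under which a maximal abelian normal subgroup of $\text{UT}(d,\mathbf{F})$ is the same datum as a maximal abelian ideal $H$ of $\text{NT}^\ast(d,\mathbf{F})$; in particular $H$ is closed under the Lie bracket, so $[\gamma,h]\in H$ for all $\gamma\in\text{NT}(d,\mathbf{F})$ and $h\in H$, and $\mathbf{N}_{d,1}=\Gamma_{d-1}$, the center of $\text{NT}(d,\mathbf{F})$, lies in $H$. The first observation is that the elements of $H$ commute as matrices: $\alpha\centerdot\beta=\beta\centerdot\alpha$ forces $\alpha\beta=\beta\alpha$, and $\alpha\ast\beta=0$ forces the same.

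For the first bullet I would invoke the standard fact that the centralizer of a maximal abelian normal subgroup of a nilpotent group (resp. of a maximal abelian ideal of a nilpotent Lie algebra) equals that subgroup; write $C(H)=H$. (If the centralizer $C$ were strictly larger it is still normal, $C/H$ is a nonzero normal subgroup of the nilpotent quotient and so meets its center, and an element $c\notin H$ with $cH$ central in the quotient generates together with $H$ an abelian normal subgroup properly containing $H$, a contradiction; the Lie-algebra version is identical.) Given $\alpha,\beta\in H$, the matrix $\alpha\beta\in\text{NT}(d,\mathbf{F})$ commutes with $H$, since for $\gamma\in H$ we have $(\alpha\beta)\gamma=\alpha(\gamma\beta)=(\alpha\gamma)\beta=\gamma(\alpha\beta)$ using that $\alpha$ and $\beta$ each commute with $\gamma$; hence $\alpha\beta\in C(H)=H$, i.e. $H^2\subseteq H$. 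Since $H$ is then closed under addition as well (in the group case $\alpha+\beta=\alpha\centerdot\beta-\alpha\beta\in H$) and under multiplication, $H$ is a commutative associative subalgebra of $\text{NT}(d,\mathbf{F})$.

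For the remaining two bullets, fix $\alpha,\beta\in H$ and $\gamma\in\text{NT}(d,\mathbf{F})$. From $[\gamma,\beta]\in H$ the matrix $\alpha$ commutes with $\gamma\beta-\beta\gamma$; expanding $\alpha(\gamma\beta-\beta\gamma)=(\gamma\beta-\beta\gamma)\alpha$ and using $\alpha\beta=\beta\alpha$ gives the identity
\[
\alpha\gamma\beta+\beta\gamma\alpha=(\alpha\beta)\gamma+\gamma(\alpha\beta).
\]
A direct check with $\gamma$ running over the matrix units $e_{ij}$ shows that $w\gamma+\gamma w=0$ for all $\gamma\in\text{NT}(d,\mathbf{F})$ exactly when $w\in\mathbf{N}_{d,1}$; since $\mathbf{N}_{d,1}$ is a subspace and $H^2$ is spanned by products $\alpha\beta$, the third bullet is therefore equivalent to $\alpha\beta\in\mathbf{N}_{d,1}$ for all $\alpha,\beta\in H$, which is the second bullet. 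So the whole of the remaining work is to prove $H^2\subseteq\mathbf{N}_{d,1}$. My approach is to exploit the grading: $H^2\subseteq\Gamma_2$, and with $\Gamma_i\Gamma_j\subseteq\Gamma_{i+j}$ and $[\Gamma_k,\text{NT}(d,\mathbf{F})]\subseteq\Gamma_{k+1}$ one gets, for $v\in H^2$ and $\gamma$ deep in the lower central series (say $\gamma\in\Gamma_{d-2}$), that $v\gamma+\gamma v$ already vanishes. One then descends the grading: at each level one feeds back the displayed identity together with $C(H)=H$, observing that an element of $H$ with a nonzero entry off the $(d,1)$ position drags a whole rectangle of matrix units into $H$ through the ideal relations, and this contradicts maximality of $H$ unless all of $H^2$ sits in $\mathbf{N}_{d,1}$.

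The main obstacle is precisely this last step — turning the cheap statement ``$v\gamma+\gamma v$ is eventually zero along the grading'' into the uniform statement $H^2\subseteq\mathbf{N}_{d,1}$. It needs careful bookkeeping of which rows and columns of elements of $H$ can simultaneously be nonzero (the abelian-ideal hypothesis is quite rigid here) and a clean use of maximality to exclude the intermediate configurations; the characteristic-$2$ case, where polarization is unavailable, should be handled by running the bracket identities directly rather than through squares. Everything before it — the commutativity of $H$ as matrices, the self-centralizing property, the first bullet, the displayed identity, and the reduction of the third bullet to the second — is routine.
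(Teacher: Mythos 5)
Your preparatory work is sound: the matrix commutativity of $H$, the self-centralizing property $C(H)=H$ (a legitimately different route to the first bullet than the paper, which instead gets $H^2$ inside the annihilator $\mathbf{N}_{d,1}\subseteq H$), the identity $\alpha\gamma\beta+\beta\gamma\alpha=(\alpha\beta)\gamma+\gamma(\alpha\beta)$, and the observation that the third bullet is equivalent to the second. But the proof stops exactly where the content of the lemma lies: you never prove $H^2\subseteq\mathbf{N}_{d,1}$, and you say so yourself. The route you sketch for it is not only incomplete but off-track: the vanishing of $v\gamma+\gamma v$ for $\gamma\in\Gamma_{d-2}$ holds for every $v\in\Gamma_2$ and carries no information about $H$, and nothing in the sketch substantiates how a nonzero entry of $H^2$ off the $(d,1)$ position ``drags a rectangle of matrix units into $H$'' or why maximality would then be violated; the characteristic-$2$ caveat is likewise a placeholder. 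As it stands, the second and third bullets --- the heart of the lemma --- are unproved.

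The missing idea is a short support argument, already within reach of your displayed identity: specialize $\gamma=xe_{i,j}$ with $i>j$. The right-hand side $(\alpha\beta)(xe_{i,j})+(xe_{i,j})(\alpha\beta)$ is supported only on column $j$ and row $i$, while the left-hand side has $(k,l)$ entry $x(\alpha_{k,i}\beta_{j,l}+\beta_{k,i}\alpha_{j,l})$, which vanishes on row $i$ and on column $j$ because $\alpha_{i,i}=\beta_{i,i}=\alpha_{j,j}=\beta_{j,j}=0$. Since the identity equates two matrices whose possible supports are disjoint (and the single overlap position $(i,j)$ vanishes on both sides), both sides are zero; in particular $(\alpha\beta)e_{i,j}=0=e_{i,j}(\alpha\beta)$ for all $i>j$, which kills columns $2,\ldots,d$ and rows $1,\ldots,d-1$ of $\alpha\beta$, i.e. $\alpha\beta\in\mathbf{N}_{d,1}$, and the third bullet follows by linearity in $\gamma$. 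This is precisely the paper's argument; it needs no grading descent and no maximality beyond $\mathbf{N}_{d,1}\subseteq H$, and it works uniformly in characteristic $2$. One further small gap in your first-bullet paragraph: $\alpha+\beta=\alpha\centerdot\beta-\alpha\beta\in H$ presupposes that $H$ is closed under differences, which in the group case is not free --- it is obtained (as in the paper's subsequent theorem) by adjoining $\alpha-\beta$ to $H$ and invoking maximality, not from $H^2\subseteq H$ alone.
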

\begin{proof}
From maximality, it follows that $H$ contains the annihilator of the
ring NT$(d,\mathbf{F})$, i.e., the subset
$\left\{x\;|\;xy=0=yx,\;\text{for all}\;
  y\in\text{NT}(d,\mathbf{F})\right\}$. We show that $H^2$ is
contained in the annihilator. We only work with the associated group,
the proof for Lie algebra is identical.

Since $H$ is normal, for any $\alpha,\beta\in H$ and $\gamma=xe_{ij}\in\text{NT}(d,\mathbf{F})$, $\alpha$ commutes with $(-xe_{ij})\centerdot \beta\centerdot(xe_{ij})$. This implies that
\[\alpha\centerdot\left(\beta+\beta(xe_{ij})-(xe_{ij})\beta\right)=\left(\beta+\beta(xe_{ij})-(xe_{ij})\beta\right)\centerdot\alpha\]
Since $H$ is abelian, $\alpha\beta=\beta\alpha$, 
\[\left(\alpha\beta(xe_{ij})+(xe_{ij})\beta\alpha\right)-\left(\alpha(xe_{ij})\beta+\beta(xe_{ij})\alpha
\right)=0\]
Now notice that the matrix represented in the first parenthesis has non-zero elements only on the $i^\textsuperscript{th}$ row and the $j^\textsuperscript{th}$ column. On the other hand the matrix represented by the second parenthesis has both the $i^\textsuperscript{th}$ row and the $j^\textsuperscript{th}$ column zero. Hence the equality is possible only when both the matrices are zero.

This implies that $\alpha\beta(xe_{ij})=0=(xe_{ij})\beta\alpha$ for
$i>j$ and $i=2,3,\ldots,d$ and $j=1,2,\ldots,d-1$. It is also clear
that $\alpha\gamma\beta+\beta\gamma\alpha=0$ for $\gamma=xe_{i,j}$.

Notice that for any
$d\times d$ matrix $A$, $Ae_{i,j}$ is the matrix with only the
$j\textsuperscript{th}$ column non-zero and the contents are the
contents of the $i\textsuperscript{th}$ column. Thus
$\alpha\beta(xe_{ij})=0$ implies that the $i\textsuperscript{th}$
column of $\alpha\beta$ is zero for
$i=2,3,\ldots,d$. Similarly one can show, that the
$j\textsuperscript{th}$ row is zero for
$j=1,2,\ldots,d-1$. Then it follows that $H^2\subseteq\mathbf{N}_{d,1}$.

Since any matrix in NT$(d,\mathbf{F})$ can be written as a linear
combination of elementary matrices $xe_{i,j}$, the proof that $H^2$ is contained
in the annihilator is complete. Furthermore, since $H$ is closed under
addition, we have $\alpha\gamma\beta+\beta\gamma\alpha=0$, for all $\gamma\in\text{NT}(d,\mathbf{F})$.
\end{proof}
\begin{theorem}[Lev\u{c}huk, 1976]
A maximal abelian normal subgroups of UT$(d,\mathbf{F})$ is also a
maximal abelian ideals of NT$^\ast(d,\mathbf{F})$ and vice versa.
\end{theorem}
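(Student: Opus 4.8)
The plan is to derive both implications from one elementary observation: when $\text{UT}(d,\mathbf{F})$ is realized inside $\text{NT}(d,\mathbf{F})$ via $\centerdot$, two elements $L,M$ satisfy $(1+L)(1+M)=(1+M)(1+L)$ exactly when $LM=ML$, that is, exactly when $L\ast M=0$. Hence for every subset $S\subseteq\text{NT}(d,\mathbf{F})$ the group centralizer $C_{\text{UT}}(S)$ and the Lie centralizer $C_{\text{NT}^{\ast}}(S)$ are literally the \emph{same} subset of $\text{NT}(d,\mathbf{F})$. I would combine this with two standard nilpotency facts, applicable since both $\text{UT}(d,\mathbf{F})$ and $\text{NT}^{\ast}(d,\mathbf{F})$ are nilpotent: in a nilpotent group a maximal abelian normal subgroup is self-centralizing, and in a nilpotent Lie algebra a maximal abelian ideal is self-centralizing. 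In each case one argues that otherwise the centralizer, taken modulo the subgroup (resp.\ ideal), is a nontrivial normal subgroup of the nilpotent quotient and so meets its centre, and a lift of a central element produces a strictly larger abelian normal subgroup (resp.\ ideal), contradicting maximality.

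Granting this, let $H$ be a maximal abelian normal subgroup of $\text{UT}(d,\mathbf{F})$. Then $H=C_{\text{UT}}(H)=C_{\text{NT}^{\ast}}(H)$. Since a centralizer in a Lie algebra is a Lie subalgebra, this already shows $H$ is an $\mathbf{F}$-subspace of $\text{NT}^{\ast}(d,\mathbf{F})$, and $H\ast H=0$ because $H$ is abelian. To see $H$ is a Lie ideal I would feed normality into the conjugation identity from the excerpt: for $i>j$, $x\in\mathbf{F}$ and $L\in H$, the conjugate $(1+xe_{i,j})^{-1}(1+L)(1+xe_{i,j})$ is $1+L+x(L\ast e_{i,j})$, the only candidate for a quadratic term, $x^{2}e_{i,j}Le_{i,j}=L_{j,i}x^{2}e_{i,j}$, vanishing because $L$ is strictly lower triangular; hence $L+x(L\ast e_{i,j})\in H$, and since $H$ is a subspace containing $L$ we get $L\ast e_{i,j}\in H$, whence $L\ast\gamma\in H$ for all $\gamma\in\text{NT}(d,\mathbf{F})$ by linearity in $\gamma$. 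So $H$ is an abelian ideal; it is maximal among abelian ideals because any abelian ideal $K\supseteq H$ satisfies $K\subseteq C_{\text{NT}^{\ast}}(K)\subseteq C_{\text{NT}^{\ast}}(H)=H$.

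The reverse direction is the mirror image. For $K$ a maximal abelian ideal of $\text{NT}^{\ast}(d,\mathbf{F})$, self-centralizing gives $K=C_{\text{NT}^{\ast}}(K)=C_{\text{UT}}(K)$, so $K$ is a subgroup of $\text{UT}(d,\mathbf{F})$, abelian since $K\ast K=0$. Conjugating $L\in K$ by a generator $1+xe_{i,j}$ again yields $L+x(L\ast e_{i,j})$, which lies in $K$ because $K$ is both a subspace and a Lie ideal; since the $1+xe_{i,j}$ generate $\text{UT}(d,\mathbf{F})$, $K$ is normal. Maximality among abelian normal subgroups follows exactly as before, an abelian normal subgroup containing $K$ being contained in $C_{\text{UT}}(K)=K$.

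The step I expect to be the genuine obstacle is the one place where matrix multiplication and the two derived operations interact: confirming that conjugation by the elementary generators contributes \emph{only} the linear Lie term $x(L\ast e_{i,j})$. This rests on $e_{i,j}^{2}=0$ and on $e_{i,j}Le_{i,j}=0$ for strictly lower triangular $L$, and over an arbitrary field --- characteristic $2$ included --- I would want to double-check that nothing extra survives in the passage between $(1+\cdot)$-multiplication and the operations $\centerdot$ and $\ast$. Lemma~\ref{lem1}, which supplies $H^{2}\subseteq\mathbf{N}_{d,1}\subseteq H$ and $\alpha\gamma\beta+\beta\gamma\alpha=0$, offers a more computational route to the same closure statements and will in any event be needed for the classification of these subgroups to follow.
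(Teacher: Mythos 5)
Your proof is correct, but it runs along a genuinely different axis than the paper's. The paper gets the two closure properties by direct computation: for the group-to-Lie direction it builds the auxiliary subgroup $N^\prime=\langle N\cup\{\alpha-\beta\}\rangle$ and invokes maximality to force $N=N^\prime$, which yields additive closure, and then extracts $\alpha\ast xe_{i,j}\in N$ from normality exactly as you do; for the converse it leans on Lemma~\ref{lem1} (maximality forces $I^2$ into the annihilator, so $I^2\subseteq I$), which makes $I$ closed under $\centerdot$ and hence a subgroup. You instead observe that $(1+L)(1+M)=(1+M)(1+L)$ iff $L\ast M=0$, so group and Lie centralizers of any subset coincide as subsets of $\text{NT}(d,\mathbf{F})$, and you combine this with the self-centralizing property of maximal abelian normal subgroups (resp.\ maximal abelian ideals) in nilpotent groups (resp.\ nilpotent Lie rings); your sketch of that property via the centre of the quotient is the standard argument and is sound here since $\text{UT}(d,\mathbf{F})$ and $\text{NT}^{\ast}(d,\mathbf{F})$ are nilpotent of class $d-1$. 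What your route buys: additive closure, $\centerdot$-closure, and both maximality assertions all fall out of $H=C(H)$ at once --- the maximality claims in particular are handled explicitly, whereas the paper's proof leaves them implicit --- and you bypass Lemma~\ref{lem1} entirely for this theorem. What the paper's route buys: it is more elementary (no appeal to the self-centralizing fact), and Lemma~\ref{lem1} has to be proved anyway since the identities $H^2\subseteq\mathbf{N}_{d,1}$ and $\alpha\gamma\beta+\beta\gamma\alpha=0$ are the workhorses of the subsequent classification of the maximal abelian ideals. Your verification that conjugation by $1+xe_{i,j}$ contributes only the linear term, i.e.\ that $x^2e_{i,j}Le_{i,j}=L_{j,i}x^2e_{i,j}=0$ because $L$ is strictly lower triangular, is exactly the point to check, and it goes through in every characteristic; the step you flagged as a possible obstacle is in fact unproblematic.
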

\begin{proof}
Let $N$ be a maximal abelian normal subgroup of UT$(d,\mathbf{F})$. Then construct the subgroup 
\[N^\prime=\left\langle N\cup \{\alpha-\beta\},\;\;\alpha,\beta\in N\right\rangle\]
Clearly $N^\prime$ is an abelian subgroup. Since matrix multiplication distributes over addition, the subgroup $N^\prime$ is normal. Since $N$ is contained in $N^\prime$, the maximality implies that $N=N^\prime$ and so $N$ contains the sum of any two of its elements.

To show that it is closed under Lie bracket, notice that, $N$ is
normal implies $(-xe_{ij})\centerdot\alpha\centerdot(xe_{ij})$ is in $N$. This implies that $\alpha+\left(\alpha\ast xe_{ij}\right)$ is in $N$. Since $N$ is closed under addition, $\alpha\ast xe_{ij}$ is in $N$.

The fact that $N$ is an abelian ideal follows from the fact that $N$ is an abelian group.

Conversely, assume that $I$ is a maximal abelian ideal of the Lie ring
NT$^\ast(d,\mathbf{F})$. Then from Lemma \ref{lem1}, $I^2\subseteq I$. This proves that $I$ is a subring, this implies that it is closed under $\centerdot$. So $I$ is a subgroup, and since $\alpha\ast xe_{ij}\in I$, for $\alpha\in I$ and $xe_{ij}\in\text{NT}(d,\mathbf{F})$, $\alpha(xe_{ij})-(xe_{ij})\alpha\in I$. This shows that $I$ is a normal subgroup.  
\end{proof}
\section{Maximal abelian ideals of NT$^\ast(d,\mathbf{F})$}
Notice that the centralizer of any set in the ring
NT$(d,\mathbf{F})$, the associated group UT$(d,\mathbf{F})$, and the Lie ring  NT$^\ast(d,\mathbf{F})$ is
identical. Let us look at the centralizer of
$\mathbf{N}_{i,j}$. Notice that, if
$\left(\sum\limits_{m>n}a_{m,n}e_{m,n}\right)N_{i,j}=N_{i,j}\left(\sum\limits_{m>n}a_{m,n}e_{m,n}\right)$. Then
the left hand side is a linear combination of the rows of $N_{i,j}$ and
the right hand side is a linear combination of columns of
$N_{i,j}$. Since the entries of $N_{i,j}$ are arbitrary field
elements, the only way that this is possible is that
$e_{m,n}\mathbf{N}_{i,j}=0$ and $\mathbf{N}_{i,j}e_{m,n}=0$ for $m>n$.
So to find the centralizer is to look for $m,n$ with $m>n$, such that
$e_{m,n}N_{i,j}=0=N_{i,j}e_{m,n}$. Now it is easy to see that the centralizer
\[\mathcal{C}\left(\mathbf{N}_{i,j}\right)=\mathbf{N}_{j+1,i-1}.\]
Since $\mathcal{C}\left(\mathbf{N}_{i+1,i}\right)=\mathbf{N}_{i+1,i}$,
if $\mathbf{N}_{i+1,i}$ is properly contained in an abelian ideal,
then that ideal is contained in the centralizer; which is
impossible. This proves that
\begin{equation}\label{mab1}
\mathbf{N}_{i+1,i}
\end{equation} 
is a maximal abelian ideal of
NT$^\ast(d,\mathbf{F})$ for $i=1,2,\ldots,d-1$.  Further notice
that, \[\Gamma_k=N_{k+1,1}+N_{k+2,2}+\ldots+N_{d,d-k},\] taking
intersection of partition subgroups, it is easy to
see that
$C\left(\Gamma_k\right)=N_{d-k+1,k}$. In particular, if $d$ is even,
i.e., $d=2k$ for some integer $k$, then
$C\left(\Gamma_k\right)=N_{k+1,k}$. 

\subsection{Are there any other
maximal abelian ideals of NT$^\ast(d,\mathbf{F})$?}

Let $H$ be a maximal abelian ideal of
NT$^\ast(d,\mathbf{F})$. Following Lev\u{c}huk~\cite{levchuk1}, we
define $H_{i,j}$ to be a subset of $\mathbf{F}$, whose elements are in
the $(i,j)$ position of a matrix belonging to $H$.

Let $m$ be the smallest, and $n$ be the largest integer such that
$H_{m,1}\neq 0$ and $H_{d,n}\neq 0$.  Since $H$ is an ideal, for
$i>j>1$ and $i<m$, $H\ast e_{j,1}\in H$. This implies that
$H_{i,j}e_{i,1}\in H$. Now for $i<m$, $H_{i,1}=0$, hence $H_{i,j}=0$
for $i<m$.

Similarly, one can show that $H_{i,j}=0$ for $j>n$ by looking at the
fact, $H\ast e_{d,j}\in H$.

Two things can happen, either $n<m$ or $n\geq m$. In the first case, it
is clear that $H$ is contained in $\mathbf{N}_{m,m-1}$ and is thus $\mathbf{N}_{m,m-1}$.

If we assume that $n\geq m$, then the description of $H$ is bit
involved. It gives rise to maximal abelian ideals of
\emph{exceptional} type.

First notice, if $n=m$, then $H_{m,j}=0$ for $j>1$ and
$H_{i,n}=0$ for $i<d$. This follows from the fact that
$H^2\subseteq\mathbf{N}_{d,1}$ (see Lemma \ref{lem1}). 

In this case ($n=m$), let $\alpha,\beta\in H$. Then
$\alpha\ast\beta=0$. If we write
$\alpha=\sum\limits_{i>j}\alpha_{i,j}e_{i,j}$ and
$\beta=\sum\limits_{i>j}\beta_{i,j}e_{i,j}$, then
$\alpha_{n,1}\beta_{d,n}-\beta_{n,1}\alpha_{d,n}=0$. Now notice that
$H$ being closed under addition, we can assume that $\alpha_{n,1}$ and
$\alpha_{d,n}$ are non zero. This implies that the maximal abelian
ideal $H$ is of the form: for a fixed $c\in\mathbf{F}$
\begin{equation}\label{mab2}
\left\{\mathbf{N}_{i+1,i-1}+xe_{i,1}+xce_{d,i};\;\; x\in\mathbf{F}\right\}.
\end{equation}

Now let us consider the case of $n>m$, in this case we first show that
$n>m+1$ is impossible. 

For $n>m$, we show that $H_{m,i}=0$ for $i>1$. Notice that for
$n>m>i>1$, $e_{n,m}\ast(H\ast e_{i,1})\in H$. Since $H\ast H=0$, this
implies that $H_{d,n}H_{m,i}e_{d,1}=0$. Since $H_{d,n}\neq 0$,
$H_{m,i}=0$ for $i>1$.

In a similar way, looking at $e_{d,j}\ast(H\ast e_{n,m})\in H$ shows
us that $H_{j,n}=0$ for $j<d$.

Then for $n>m+1$, $(H\ast e_{n,m+1})\ast e_{m+1,m}\in H$. This implies
that $H_{d,n}e_{d,n}\in H$. The fact that $H\ast H=0$, gives us that
$H_{d,n}H_{m,1}e_{d,1}=0$. However this is impossible. Hence $n>m+1$
is impossible.

Now we show that, if $n=m+1$, then $2\mathbf{F}=0$, i.e., $\mathbf{F}$
has characteristic $2$. Since $H_{m,1}$ and $H_{d,m+1}$ are both
non-zero. Since $H$ is closed under addition, there is a matrix $\alpha\in
H$, where $\alpha=\sum\limits_{i>j}\alpha_{i,j}e_{i,j}$ and
$\alpha_{m,1}\neq 0$ and $\alpha_{d,m+1}\neq 0$.

From Lemma \ref{lem1}, we know that $2\alpha(xe_{m+1,m})\alpha=0$ for
any $x\in\mathbf{F}$, which says that
$2x\left(\alpha_{d,m+1}\alpha_{m,1}e_{d,j}\right)=0$. Since $x$ is
arbitrary, we have $2\mathbf{F}=0$.

From Lemma \ref{lem1}, $H^2\subset\mathbf{N}_{d,1}$, this
  implies, $H_{m+1,i}=0$ for $i>1$ and $H_{j,m}=0$ for $j<d$. Let
  $\alpha=\sum\limits_{i>j}\alpha_{i,j}e_{i,j}$ is in $H$. Since $H$
  is closed under addition, we may assume that
  $\alpha_{m,1},\alpha_{m+1,1},\alpha_{d,m}$ and $\alpha_{d,m+1}$ are
  all nonzero. 
Then $(H\ast e_{m+1,m})\ast H=0$, implies that $(\alpha\ast
e_{m+1,m})\ast\beta=0$, where
$\beta=\sum\limits_{i>j}\beta_{i,j}e_{i,j}$. This is the same as
saying that
$\alpha_{d,m+1}\beta_{m,1}+\alpha_{m,1}\beta_{d,m+1}=0$. This implies
that there is a $c\in \mathbf{F}$ such that
$\alpha_{d,m+1}=c\alpha_{m,1}$.

Then the maximal abelian ideal $H$ is of the form: for a
$c\in\mathbf{F}$ and $i=2,3,\ldots,d-2$,
\begin{equation}\label{mab3}
\mathbf{N}_{i+2,i-1}+ae_{i+1,1}+be_{d,i}+xe_{i,1}+cxe_{d,i+1};\; a,b,x\in\mathbf{F}.
\end{equation}
 So by now we have proved a theorem.
\begin{theorem}[Weir, 1955; Levchuk 1976]
The maximal abelian ideals of the Lie ring NT$^\ast(d,\mathbf{F})$ are
of the following form: (\ref{mab1}), (\ref{mab2}) and (\ref{mab3}) above. The (\ref{mab3})
occurs only when the field is of characteristic 2. 
\end{theorem}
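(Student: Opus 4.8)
This theorem simply gathers together what has been established in the present subsection, so my plan is to organize the running discussion into one clean classification of an arbitrary maximal abelian ideal $H$ of NT$^{\ast}(d,\mathbf{F})$. Two facts drive everything. By maximality $H$ contains the annihilator of NT$(d,\mathbf{F})$, which is exactly $\mathbf{F}e_{d,1}=\mathbf{N}_{d,1}$ (a matrix kills NT$(d,\mathbf{F})$ on both sides precisely when its only possibly nonzero entry sits at $(d,1)$), so $H$ always contains the bottom-left corner; moreover, again by maximality, the $\mathbf{F}$-span of $H$ is once more an abelian ideal and hence equals $H$, so $H$ is an $\mathbf{F}$-subspace, which is what legitimizes the ``generic entry'' steps below. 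The second input is Lemma~\ref{lem1}: $H^{2}\subseteq H\cap\mathbf{N}_{d,1}$, and $\alpha\gamma\beta+\beta\gamma\alpha=0$ for all $\alpha,\beta\in H$ and $\gamma\in\text{NT}(d,\mathbf{F})$. With this in hand I introduce $H_{i,j}\subseteq\mathbf{F}$ as in the text and let $m$ be least with $H_{m,1}\neq0$ and $n$ greatest with $H_{d,n}\neq0$; both exist since $H_{d,1}\neq0$, and one has $m\ge2$, $n\le d-1$.

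Next come the two boundary reductions. Bracketing $H$ against $e_{j,1}$ keeps it inside $H$ and carries a matrix to a combination of its columns relocated into column $1$; since $H_{i,1}=0$ for $i<m$, this forces $H_{i,j}=0$ whenever $i<m$, and symmetrically bracketing against $e_{d,j}$ forces $H_{i,j}=0$ whenever $j>n$. So every matrix of $H$ is supported in the band $\{i\ge m,\ j\le n\}$, and the proof forks on $n$ versus $m$. If $n<m$ the band already gives $H\subseteq\mathbf{N}_{m,m-1}$, and since $\mathbf{N}_{m,m-1}=\mathbf{N}_{(m-1)+1,\,m-1}$ is an abelian ideal that is self-centralizing (hence a maximal abelian ideal, as established above), maximality of $H$ forces $H=\mathbf{N}_{m,m-1}$, which is form~(\ref{mab1}). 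If $n=m$, then $H^{2}\subseteq\mathbf{N}_{d,1}$ shows that, within $H$, row $m$ is supported only at $(m,1)$ and column $m$ only at $(d,m)$, so the sole positions outside $\mathbf{N}_{m+1,m-1}$ are $(m,1)$ and $(d,m)$; expanding $\alpha\ast\beta=0$ on two generic matrices then gives $\alpha_{m,1}\beta_{d,m}=\beta_{m,1}\alpha_{d,m}$, tying these two coordinates by a necessarily nonzero constant $c$, and, using $\mathbf{N}_{d,1}\subseteq H$ together with the fact that $H$ is an ideal containing a matrix with nonzero $(m,1)$-entry, one checks $H$ fills out the whole one-parameter family, i.e.\ form~(\ref{mab2}).

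It remains to treat $n\ge m+1$. A preliminary pass removes surplus entries: $e_{n,m}\ast(H\ast e_{i,1})\subseteq H$ combined with $H\ast H=0$ confines row $m$ of $H$ to $(m,1)$, and symmetrically column $n$ to $(d,n)$. If $n>m+1$, then by the band the element $(H\ast e_{n,m+1})\ast e_{m+1,m}\subseteq H$ is a scalar multiple of $e_{d,m}$, and bracketing it against a matrix of $H$ with nonzero $(m,1)$-entry produces the nonzero element $H_{d,n}H_{m,1}e_{d,1}$, contradicting $H\ast H=0$; hence $n=m+1$ is the only survivor. In that subcase, $\alpha\gamma\beta+\beta\gamma\alpha=0$ with $\gamma=xe_{m+1,m}$ and $\alpha=\beta$ a matrix having $\alpha_{m,1}\alpha_{d,m+1}\neq0$ yields $2x\,\alpha_{m,1}\alpha_{d,m+1}e_{d,m}=0$ for every $x$, whence $\mathbf{F}$ has characteristic $2$; then $H^{2}\subseteq\mathbf{N}_{d,1}$ trims rows $m,m+1$ to column $1$ and columns $m,m+1$ to row $d$, leaving $\mathbf{N}_{m+2,m-1}$ together with the four positions $(m,1),(m+1,1),(d,m),(d,m+1)$, while $(H\ast e_{m+1,m})\ast H=0$ gives $\alpha_{d,m+1}\beta_{m,1}+\alpha_{m,1}\beta_{d,m+1}=0$, tying $(d,m+1)$ to $(m,1)$ by a constant $c$ with $(m+1,1)$ and $(d,m)$ still free; this exhibits $H$ in the form~(\ref{mab3}), which can occur only in characteristic $2$.

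The main obstacle I anticipate is bookkeeping rather than any isolated idea: in every branch one must check carefully that the positions absent from (\ref{mab1})--(\ref{mab3}) really are annihilated on all of $H$, and, conversely, that $H$ is not a proper sub-object of the displayed family — which is exactly where one leans on $\mathbf{N}_{d,1}\subseteq H$ together with closure under the Lie bracket and under addition. For (\ref{mab1}) and (\ref{mab2}) it is worth isolating a one-line lemma that the displayed family is itself an abelian ideal, which upgrades the containment to equality. The characteristic-$2$ case~(\ref{mab3}) is the genuinely delicate one, and its displayed form must be read with some care: the full ambient shape there is an ideal but not abelian (its $(d,m)$-direction and $(m,1)$-direction bracket to $e_{d,1}$), so $H$ is a particular maximal abelian sub-ideal of that shape, singled out by the full condition $H\ast H=0$, and pinning down exactly which sub-ideal — and verifying it is self-centralizing — is where the real work of this case lies.
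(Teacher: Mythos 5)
Your proposal is correct and follows essentially the same route as the paper, whose proof of this theorem is precisely the preceding discussion that you reproduce and tighten: introduce the coordinate sets $H_{i,j}$, the extremes $m$ and $n$, kill the entries outside the band using $H\ast e_{j,1}$ and $H\ast e_{d,j}$, and split into the cases $n<m$ (giving~(\ref{mab1})), $n=m$ (giving~(\ref{mab2})), $n>m+1$ (impossible) and $n=m+1$ (forcing characteristic $2$ and giving~(\ref{mab3})). Your closing caveat is also well taken: the family~(\ref{mab3}) with $a,b,x$ all free is indeed not abelian (already $e_{d,i}\ast e_{i,1}=e_{d,1}$, so the $b$- and $x$-directions fail to commute), hence the exceptional $H$ is really a maximal isotropic sub-ideal of that shape with the $(d,i)$-entry tied to the $(i,1)$- and $(i+1,1)$-entries; this is an imprecision in the paper's own display, which you flag but, like the paper, do not fully work out.
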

\section{The automorphism group of UT$(d,\mathbf{F})$}
In this section we describe all the automorphisms of the group UT$(d,\mathbf{F})$. The automorphisms are as follows:
\begin{description}
\item[Extremal automorphisms -- Aut$_E$] These automorphisms arise from
  the maximal abelian ideals of exceptional type. As we saw, the
  maximal abelian ideals of exceptional type are different for a field
  of characteristic 2. So, we will have two different types of
  automorphisms. One for even characteristic  and other for the field of
  odd characteristic.
\begin{description}
\item[Odd Characteristic]
\begin{equation}\label{aut1}
xe_{2,1}\mapsto xe_{2,1}+axe_{d,2}+x^\lambda e_{d,1}
\end{equation}
Where $x^\lambda:\mathbf{F}\rightarrow\mathbf{F}$ is a map that
satisfies the equation $(x+y)^\lambda-x^\lambda-y^\lambda=axy$ and
$a=2^\lambda-2(1^\lambda)$. All other generators remain fixed.

Similarly, one can define $xe_{d,d-1}=xe_{d,d-1}+axe_{d-1,1}+x^\lambda
e_{d,1}$. All other generators remain fixed and the $\lambda$
satisfies the above relations.

If $\mathbf{F}$ is of even characteristic, then
$a=0^\lambda$. It is easy to see that, since $0^\lambda=0$, $a=0$. So
in the case of the characteristic of the field to be even, the
extremal automorphisms become the central automorphisms.

Clearly automorphisms of this form generate a subgroup of the full
automorphism group of UT$(d,\mathbf{F})$, denoted by Aut$_E$ and is
isomorphic to $\mathbf{F}^+\oplus\mathbf{F}^+$.
\item[Even Characteristic]
\begin{equation}\label{aut2}
xe_{2,1}\mapsto xe_{2,1}+axe_{d,3}
\end{equation}
all other generators remain fixed.
Similarly one can define $xe_{d,d-1}\mapsto
xe_{d,d-1}+axe_{d-1,1}$. Again this automorphism group Aut$_E$ is
isomorphic to $\mathbf{F}^+\oplus\mathbf{F}^+$. We will later show
that these automorphisms are only possible when the field $\mathbf{F}$
is the field of two elements $\mathbb{Z}_2$.
\end{description}
\item[Flip automorphism -- Aut$_F$]
This automorphism is given by flipping the matrix by the anti-diagonal
and is given by the equation:
\begin{equation}\label{flip}
xe_{i,j}\mapsto xe_{d-j+1,d-i+1}
\end{equation}
This is clearly an automorphism of order 2 and forms a subgroup of the
automorphism group and will be denoted by Aut$_F$.
\item[Diagonal automorphisms -- Aut$_D$]
This automorphism is conjugation by a diagonal matrix. Diagonal
matrices are defined as matrices with only non-zero terms in the main
diagonal and everything else zero. Let $D=[x_1,x_2,\ldots,x_d]$ be a
diagonal matrix, with $x_1,x_2,\ldots,x_d$ as the non-zero diagonal
entries in the respective rows. Then
$D^{-1}xe_{i,j}D=d_i^{-1}xd_je_{i,j}$. So a diagonal matrix maps
$xe_{i,j}\mapsto d_i^{-1}xd_je_{i,j}$. The kernel of this map is the set of all scalar matrices, i.e.,
$x_1=x_2=\cdots=x_d$. This is clearly a subgroup of the automorphism
group, which is of the form
$\mathbf{F}^\times\times\mathbf{F}^\times\times\ldots\times\mathbf{F}^\times$
($d-1$ times), and will be denoted as Aut$_D$.
\item[Field automorphisms -- Aut$_A$] This automorphisms can be
  described  as
\begin{equation}
xe_{i+1,i}\mapsto x^\mu e_{i+1,i}\;\;i=1,2,\ldots,d-1.
\end{equation}
Where $\mu:\mathbf{F}\rightarrow\mathbf{F}$ is a field automorphism.
\item[Inner automorphisms -- Aut$_I$] This is the well known
 normal subgroup of the automorphism group in any non-abelian group; where $x\mapsto g^{-1}xg$ for
  some $g\in\text{UT}(d,\mathbf{F})$ and $x\in\text{UT}(d,\mathbf{F})$.
\item[Central automorphisms -- Aut$_C$] Central automorphisms are the
  centralizers of the group of inner automorphisms in the group of
  automorphisms. The simplest way to explain them is to ``multiply''
  the generators with an element of the center. In the case of
  UT$(d,\mathbf{F})$ it is
\begin{equation}
xe_{i+1,i}\mapsto xe_{i+1,i}+x^\lambda e_{d,1}
\end{equation}
Where $\lambda$ is a linear map of $\mathbf{F}^+$ to itself.
\end{description}
\subsection{Why are these the only automorphisms of
  UT$(d,\mathbf{F})$?}
We know that any automorphism $\phi$ of any group maps a maximal abelian normal
subgroup to a maximal abelian normal subgroup. Out first lemma uses that to prove:
\begin{lemma}
Let $\phi$ be an automorphism of UT$(d,\mathbf{F})$. Then
$\mathbf{N}_{i+1,i}^\phi$ is either $\mathbf{N}_{i+1,i}$ or
$\mathbf{N}_{d-i+1,d-i}$, for $i=1,2,\ldots,d-1$.
\end{lemma}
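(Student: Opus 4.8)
The plan is to combine two facts recorded above: any automorphism of a group carries maximal abelian normal subgroups to maximal abelian normal subgroups, and the maximal abelian ideals --- equivalently, by Lev\u{c}huk's theorem, the maximal abelian normal subgroups --- of $\mathrm{UT}(d,\mathbf{F})$ are exactly those of types $(\ref{mab1})$, $(\ref{mab2})$ and $(\ref{mab3})$. Since $\mathbf{N}_{i+1,i}$ is a maximal abelian normal subgroup (it is the ideal $(\ref{mab1})$), the image $\mathbf{N}_{i+1,i}^\phi$ is again of one of these three types, so it remains to show (a) that it is of the rectangular type $(\ref{mab1})$, say $\mathbf{N}_{j+1,j}$, and (b) that $j\in\{i,d-i\}$.

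The core of the argument takes place in the characteristic quotients $\mathrm{UT}(d,\mathbf{F})/\Gamma_2$ and $\Gamma_2/\Gamma_3$ (each $\Gamma_k$ is characteristic, being a term of the lower central series). The abelianization $\mathrm{UT}(d,\mathbf{F})/\Gamma_2$ is the vector group $L_1\oplus\dots\oplus L_{d-1}$ with $L_k=\mathbf{F}\,\overline{e_{k+1,k}}$, and $\phi$ induces an automorphism $\overline{\phi}$ of it. Inspecting the three types $(\ref{mab1})$, $(\ref{mab2})$, $(\ref{mab3})$ against the projection $\mathrm{UT}(d,\mathbf{F})\to\mathrm{UT}(d,\mathbf{F})/\Gamma_2$ shows that every maximal abelian normal subgroup is either contained in $\Gamma_2$ or mapped \emph{onto} a single coordinate line $L_k$. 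Since $e_{a+1,a}\in\mathbf{N}_{a+1,a}$ forces $\mathbf{N}_{a+1,a}\not\subseteq\Gamma_2$ for each $a$, the same holds after applying $\phi$, so $\overline{\phi}(L_a)$ is a full coordinate line for every $a$; thus $\overline{\phi}$ permutes $L_1,\dots,L_{d-1}$ by some permutation $\sigma$, and $\mathbf{N}_{i+1,i}^\phi$ projects onto $L_{\sigma(i)}$. To pin down $\sigma$, I would use that the group commutator descends to a $\phi$--equivariant pairing $\mathrm{UT}(d,\mathbf{F})/\Gamma_2\times\mathrm{UT}(d,\mathbf{F})/\Gamma_2\to\Gamma_2/\Gamma_3$; by $(\ref{grpeqn2})$ its restriction to $L_a\times L_b$ is nonzero exactly when $|a-b|=1$, so $\sigma$ preserves the adjacency relation $|a-b|=1$ on $\{1,\dots,d-1\}$ and must be $\mathrm{id}$ or $a\mapsto d-a$. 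When $\sigma(i)\notin\{1,d-1\}$, the rectangle $\mathbf{N}_{\sigma(i)+1,\sigma(i)}$ is the unique maximal abelian normal subgroup projecting onto $L_{\sigma(i)}$, and the lemma follows in those cases.

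The case $i\in\{1,d-1\}$ is the genuine obstacle, since then $\mathbf{N}_{i+1,i}^\phi$ might a priori be an exceptional ideal of type $(\ref{mab2})$ or $(\ref{mab3})$ whose distinguished entry sits on the first subdiagonal. In characteristic $2$ it is cheap to exclude: for any such exceptional ideal, a short computation using $\alpha\centerdot\alpha=2\alpha+\alpha^2$ together with $\alpha^2\in\mathbf{N}_{d,1}$ produces an element with $\alpha\centerdot\alpha\neq 0$, so the ideal has an element of order $4$ and is not elementary abelian --- whereas $\mathbf{N}_{i+1,i}$ is, and this property is preserved by $\phi$. In characteristic $\neq 2$ this separation is unavailable, and necessarily so: the extremal automorphisms $(\ref{aut1})$ with $a\neq 0$ genuinely carry $\mathbf{N}_{2,1}$ and $\mathbf{N}_{d,d-1}$ onto exceptional ideals of type $(\ref{mab2})$. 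The remedy is to first compose $\phi$ with a suitable element of $\mathrm{Aut}_E$, normalizing its effect on the boundary generators $e_{2,1}$ and $e_{d,d-1}$, and only then run the argument above. Identifying which extremal automorphism to divide out, and checking that no exceptional image survives this normalization, is the crux of the proof --- and is precisely the point at which the subgroup $\mathrm{Aut}_E$ enters the classification.
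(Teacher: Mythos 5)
Your argument for the middle range $2\le i\le d-2$ is correct, and it is a genuinely different route from the paper's: the paper gets the dichotomy by observing that $C(\Gamma_i)=\mathbf{N}_{d-i+1,i}$ is characteristic and sits inside both rectangles as the largest characteristic subgroup, whereas you read the image off from the induced automorphism of $\mathrm{UT}(d,\mathbf{F})/\Gamma_2$ together with the commutator pairing into $\Gamma_2/\Gamma_3$, which pins the induced permutation of the lines $L_1,\dots,L_{d-1}$ to be the identity or the reversal. Both arguments rest on the classification of maximal abelian normal subgroups, and for $2\le i\le d-2$ yours is, if anything, more explicit than the paper's about why only those two images can occur.

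The genuine problems are at $i\in\{1,d-1\}$, and there your proposal does not close the gap. The characteristic-$2$ exclusion is wrong: the display (\ref{mab3}) as printed is too large to be abelian (an element with $b\ne 0$ and one with $x\ne 0$ have Lie bracket $bxe_{d,1}$), and once the abelianness constraint (forcing $b=ca$) is imposed, every element $\alpha$ of such an ideal satisfies $\alpha^2=0$, because the two contributions $caxe_{d,1}$ cancel in characteristic $2$; so the ideal is elementary abelian and your order-$4$ element does not exist. (Your computation does rule out images of type (\ref{mab2}) in characteristic $2$, but not type (\ref{mab3}), which is exactly the relevant one.) Indeed no isomorphism-invariant argument can succeed: over $\mathbb{Z}_2$ the automorphism (\ref{aut2}) with $a\ne 0$ sends $e_{2,1}$ to $e_{2,1}+ae_{d,3}$, so the image of $\mathbf{N}_{2,1}$ contains an element outside both rectangles and is therefore an exceptional ideal. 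In odd characteristic you correctly observe that (\ref{aut1}) with $a\ne 0$ moves $\mathbf{N}_{2,1}$ onto an ideal of type (\ref{mab2}), but you then leave the normalization by $\mathrm{Aut}_E$ as ``the crux,'' i.e., unproved, so the boundary cases remain open in your write-up. To be fair, the fault is partly in the statement: as printed the lemma is false at $i=1$ and $i=d-1$ whenever a nontrivial extremal automorphism exists, the paper's own proof only genuinely covers $2\le i\le d-2$ (for $i=1$ the characteristic subgroup $\mathbf{N}_{d,1}$ lies in the exceptional ideals as well), and the paper handles the boundary generators afterwards via $\mathbf{A}_2$, $\mathbf{A}_{d-1}$ and composition with extremal automorphisms. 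If you restrict your lemma to $2\le i\le d-2$, or state it up to composing $\phi$ with an element of $\mathrm{Aut}_E$, your middle-range argument is a complete and correct proof; as a proof of the statement for all $i=1,\dots,d-1$ it is not.
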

\begin{proof}
Notice that the centralizer $C(\Gamma_k)$ of $\Gamma_k$, the $k\textsuperscript{th}$
element in the central series is characteristic.
If $d$ is even, and $d=2k$ for some $k$, then
$C\left(\Gamma_k\right)=N_{k+1,k}$. Hence $N_{k+1,k}^\phi=N_{k+1,k}$.
 Further notice that for $i<\frac{d}{2}$,
$\mathbf{N}_{i+1,i}\cap\mathbf{N}_{d-i+1,d-i}=\mathbf{N}_{d-i+1,i}$
is a characteristic subgroup of UT$(d,\mathbf{F})$. Since flip is an
automorphism of UT$(d,\mathbf{F})$, any characteristic subgroup must
be symmetric about the second diagonal.

Then $\mathbf{N}_{d-i+1,i}$ is the maximal characteristic subgroup of
UT$(d,\mathbf{F})$ contained in
both $\mathbf{N}_{i+1,i}$ and $\mathbf{N}_{d-i+1,d-i}$. This means
that it must be contained as a maximal characteristic subgroup in
$\mathbf{N}_{i+1,i}^\phi$. So $\mathbf{N}_{i+1,i}^\phi$ has two choices,
$\mathbf{N}_{i+1,i}$ or $\mathbf{N}_{d-i+1,d-i}$.
\end{proof}
It is
important to notice here that $xe_{2,1}$ and
$xe_{d,d-1}$ are not only contained in the maximal abelian normal
subgroups $N_{2,1}$ and $N_{d,d-1}$ . They
are also contained in the exceptional subgroups. Let us call the maximal
abelian normal subgroup of type~(\ref{mab2}) (or of type~(\ref{mab3}),
when characteristic of the field is $2$) containing $xe_{2,1}$ as $\mathbf{A}_2$
and the maximal abelian normal subgroup containing
$xe_{d,d-1}$ as $\mathbf{A}_{d-1}$.

It is clear from the last lemma, that $\mathbf{A}_2^\phi$ is either
$\mathbf{A}_2$ or $\mathbf{A}_{d-1}$. Then clearly, if necessary, composing $\phi$ with the flip
automorphism, we claim that $\text{UT}(d,\mathbf{F})/\Gamma_2$ is
invariant under $\phi$. We can actually say more,
$xe_{i+1,i}^\phi=x^{\lambda_i} e_{i+1,i} \mod \Gamma_2$, for $i=1,2,\ldots,d-1$. Where
$\lambda_i:\mathbf{F}\rightarrow\mathbf{F}$ is a map. Now let us try to
understand the map $\lambda_i$. Since $\phi$ is an automorphism, each
map $\lambda_i$ is a bijection.

Now recall the relations in UT$(d,\mathbf{F})$ (Equations
\ref{grpeqn1}). It follows from the relation $xe_{i,j}\centerdot ye_{i,j}=(x+y)e_{i,j}$, if $xe_{i+1,i}\mapsto
x^{\lambda_i}e_{i+1,i}$ then $\lambda_i$ is a linear map of
$\mathbf{F}^+$.

Furthermore, since
$\left[xe_{i+1,i},ye_{i,i-1}\right]=\left[ye_{i+1,i},xe_{i,i-1}\right]$,
$x^{\lambda_i}y^{\lambda_{i-1}}=y^{\lambda_i}x^{\lambda_{i-1}}$, for
$i=2,3,\ldots,d-1$. Also, since
$\left[xe_{2,1},ye_{3,2}\right]^\phi=\left[ye_{2,1},xe_{3,2}\right]$,
  $x^{\lambda_1}y^{\lambda_2}=y^{\lambda_1}x^{\lambda_2}$.

Taking all these together, it follows that
$\lambda_1=k_1\lambda_2=k_3\lambda_3=\cdots=k_{d-2}\lambda_{d-1}$. Where
$k_i$ are nonzero fields elements.

So now we are in a position to claim, that composing $\phi$ with a
field automorphism and a diagonal automorphism, $\phi$ maps like
the identity UT$(d,\mathbf{F})/\Gamma_2$.

As we saw from the above lemma, $\phi$, (after composing with the flip,
if necessary) maps $A_2$ and $A_{d-1}$ to itself. So it follows that
the $xe_{2,1}^\phi$ and $xe_{d,d-1}^\phi$ are in $A_2$ and $A_{d-1}$
respectively. In case of odd characteristic, the description of the
extremal automorphism is obvious and is defined in Equation \ref{aut1}.

In case of the even characteristic we need to say more. Notice that in
the case that the characteristic of the field $\mathbf{F}$ being even,
the maximal abelian normal subgroup containing $e_{2,1}$ is
\[\mathbf{N}_{4,1}+ae_{3,1}+be_{d,2}+xe_{2,1}+cxe_{d,3}\]
where $a,b\in\mathbf{F}$, $c\in\mathbf{F}^\times$,
$x\in\mathbf{F}$. We want to know more about the automorphism that
moves $e_{2,1}$. Using the flip automorphism, if necessary, we can
assume that $A_2^\phi=A_2$. So the only choice is 
\[xe_{2,1}\mapsto xe_{2,1}+axe_{d,3}+x^\lambda e_{d,2}+ x^\mu e_{d,1}\]
where $\lambda,\mu :\mathbf{F}\rightarrow\mathbf{F}$.
From relation \ref{grpeqn1}, we see that $(x+y)^\lambda =x^\lambda+y^\lambda$. This implies that $0^\lambda=0$. Also it follows that
$(x+y)^\mu=x^\mu + y^\mu + x^\lambda y$. This implies that
$(x+y)^\mu=x^\mu+y^\mu+1^\lambda xy$. Putting $x=0$ and $y=1$, we have
that $0^\mu=0$. Putting $x=1$ and $y=1$ we get
$1^\lambda=0$. Since $x^\lambda=1^\lambda x$, this implies that
$x^\lambda=0$, i.e., $\lambda$ is the zero map.

Once $\lambda$ is the zero map, clearly the $e_{d,1}$ entry is not
necessary. Hence we have the Equation \ref{aut2}.

From the commutator relations (Relation \ref{grpeqn2}), we see that the
\[xye_{3,1}\mapsto xye_{3,1}+ax^2ye_{d,1}+axye_{d,2}.\] Then
interchanging $x$ and $y$, we see that $axy(y-x)=0$. Since $a\neq 0$,
so in the field $\mathbf{F}$, for any two distinct element, 
one is zero. This means that $\mathbf{F}=\mathbb{Z}_2$.

Notice that
for $i=s$, $[xe_{i+1,i},ye_{s,t}]=xye_{i+1,t}$. So if $s-t=k$, then
$(i+1)-l=k+1$. Using this idea one can clear each and every subdiagonals,
one after another, starting with $k=2$. This means that by suitable conjugation,
$xe_{i+1,i}^\phi$ will have no non-zero entries except the $(d,1)$
entry. In the case of $xe_{2,1}^\phi$ and $xe_{d,d-1}^\phi$ one can
actually clear all non-zero entries, including the $(d,1)$ entry, using conjugation.
One can choose the conjugators in such a way that this gives
rise to an inner automorphism. This proves the following lemma:
\begin{lemma}
Let $\Gamma_k$ be as defined. For an automorphism $\phi$ of
UT$(d,\mathbf{F})$, which fixes $\text{UT}(d,\mathbf{F}) \mod
\Gamma_2$, one can use inner automorphisms, such that $\phi$ acts like
the identity modulo $\Gamma_{d-1}$. 
\end{lemma}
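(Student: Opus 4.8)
The plan is to argue by induction on the subdiagonal index $k$, for $k=2,3,\dots,d-2$. Suppose $\phi$ has already been composed with inner automorphisms so that $xe_{i+1,i}^\phi\equiv xe_{i+1,i}\pmod{\Gamma_k}$ for all $i$ (the case $k=2$ is the hypothesis); write $xe_{i+1,i}^\phi\equiv xe_{i+1,i}+D_i(x)\pmod{\Gamma_{k+1}}$, where $D_i(x)$ lies in the $k$-th layer $\Gamma_k/\Gamma_{k+1}$, i.e. is carried by the $k$-th subdiagonal. For $w\in\Gamma_{k-1}$, composing $\phi$ with the inner automorphism induced by $1+w$ replaces $D_i(x)$ by $D_i(x)+[xe_{i+1,i},w]$ in the $k$-th layer and disturbs nothing on a lower subdiagonal, since the remaining commutator and higher-order terms lie in $\Gamma_{2k-1}\subseteq\Gamma_{k+1}$ (here $k\ge 2$ is used). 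So the inductive step reduces to producing a single $w\in\Gamma_{k-1}$, which may be taken concentrated on the $(k-1)$-st subdiagonal, with $[xe_{i+1,i},w]\equiv -D_i(x)\pmod{\Gamma_{k+1}}$ for every $i$.

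Three structural facts turn this into a solvable linear system. First, by the preceding lemma on the images of the partition subgroups (together with the mod-$\Gamma_2$ hypothesis, and, for $i=1$ and $i=d-1$, the fact that an extremal automorphism has already been divided out), $\mathbf{N}_{i+1,i}^\phi=\mathbf{N}_{i+1,i}$, so $xe_{i+1,i}^\phi\in\mathbf{N}_{i+1,i}$ and hence $D_i(x)$ is carried by the $k$ matrix units $e_{i+1,i-k+1},e_{i+2,i-k+2},\dots,e_{i+k,i}$ lying inside that rectangle. Second, applying $\phi$ to $xe_{i+1,i}\centerdot ye_{i+1,i}=(x+y)e_{i+1,i}$ shows each $D_i$ is additive, and applying it to $[xe_{i+1,i},ye_{j+1,j}]=[ye_{i+1,i},xe_{j+1,j}]$ for $|i-j|=1$ and comparing the $(k+1)$-st subdiagonals shows each coordinate of $D_i$ is in fact $\mathbf{F}$-linear in $x$. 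Third — the key point — for $|i-j|\ge 2$ the commutator $[xe_{i+1,i},ye_{j+1,j}]$ is zero, so $[xe_{i+1,i}^\phi,ye_{j+1,j}^\phi]=0$ as well; expanding this identity and reading off the $(k+1)$-st subdiagonal for $j=i-k,i-k+1,\dots,i-2$ and the mirror values $j=i+2,\dots,i+k$ forces all the interior coordinates $e_{i+2,i-k+2},\dots,e_{i+k-1,i-1}$ of every $D_i$ to vanish and couples the surviving corner coordinate of $D_i$ at $(i+1,i-k+1)$ to the corner coordinate of $D_{i-k}$ at $(i,i-k)$ with opposite sign.

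Granting these, $[xe_{i+1,i},w]$ on the $k$-th subdiagonal equals $x\,w_{i,i-k+1}\,e_{i+1,i-k+1}-x\,w_{i+k,i+1}\,e_{i+k,i}$ (each term present when its indices are in range), so the required equations read $w_{i,i-k+1}=-(\text{coefficient of }e_{i+1,i-k+1}\text{ in }D_i)$ and $w_{i+k,i+1}=(\text{coefficient of }e_{i+k,i}\text{ in }D_i)$; by $\mathbf{F}$-linearity each $w_{s,s-k+1}$ is thereby a well-defined field element, and the corner-coupling relations of the third fact say exactly that the two equations involving a given $w_{s,s-k+1}$ are compatible. Taking $w$ accordingly and composing $\phi$ with the inner automorphism $g\mapsto(1+w)^{-1}g(1+w)$ annihilates the $k$-th layer of the discrepancy, completing the inductive step. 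After the step $k=d-2$ every generator can differ from its image only in the $(d,1)$ entry, i.e. modulo $\Gamma_{d-1}$, which is the assertion; for the two generators $e_{2,1}$ and $e_{d,d-1}$ even that entry is cleared by a final conjugation (for instance $[xe_{2,1},ye_{d,2}]=-xy\,e_{d,1}$), and the finitely many conjugations performed compose into a single inner automorphism.

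The main obstacle is the third structural fact: one must verify, coordinate by coordinate, that the vanishing-commutator identities kill precisely the interior coordinates of the $D_i$ and leave only the corner-matching pattern that the maps $[xe_{i+1,i},-]$ are able to produce. The delicate cases are the generators near the two ends $i=1$ and $i=d-1$ (interchanged by the flip automorphism): there the rectangle $\mathbf{N}_{i+1,i}$ truncates the $k$-th subdiagonal, and some of the identities used above involve matrix units outside the range $1\le j<i\le d$ and must be replaced by ones obtained from bracketing $D_i(x)$ against a generator that moves the offending coordinate to an isolated entry (e.g. for $i<k$ one uses $[xe_{i+1,i},ye_{k+2,k+1}]=0$). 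These replacements are routine but require care with the index ranges.
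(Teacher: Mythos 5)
Your proof is correct and takes essentially the same route as the paper's (very terse) argument: clear the subdiagonals of the discrepancy one at a time by suitable conjugations, with your induction supplying exactly the details the paper leaves implicit — in particular that a single conjugator $w$ on the $(k-1)$-st subdiagonal must serve all generators at once, which your vanishing-commutator identities (killing the interior coordinates of the $D_i$ and coupling the corner coordinates with opposite signs) show is a compatible linear system. Your explicit appeal to the prior normalizations ($\mathbf{N}_{i+1,i}^\phi=\mathbf{N}_{i+1,i}$ for all $i$, with the extremal part already divided out for $i=1,d-1$) is precisely the context in which the paper intends the lemma to be applied.
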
 
So now we have the following:
\begin{itemize}
\item Use the flip automorphism, if necessary, so that
  $\mathbf{N}_{i+1,i}^\phi=\mathbf{N}_{i+1,i}$ for $i=2,3,\ldots,d-2$.
\item Using extremal automorphism, if necessary, so that
  $\mathbf{N}_{i+1,i}^\phi=\mathbf{N}_{i+1,i}$ for $i=1,2,\ldots,d-1$.
\item Use a field automorphism and a diagonal automorphism, if necessary, so
  that $xe_{i+1,i}^\phi=xe_{i+1,i}\mod \Gamma_2$.
\item Use inner automorphisms, if necessary, so that
  $xe_{i+1,i}^\phi=xe_{i+1,i}\mod \Gamma_{d-1}$.
\item Use central automorphisms, if necessary, so that
  $xe_{i+1,i}^\phi=xe_{i+1,i}$. Note that the central automorphisms
  corresponding to $xe_{2,1}$ and $xe_{d,d-1}$ are inner automorphisms. 
\end{itemize}

Now we have proved the following theorem.
\begin{theorem}
The automorphism group of UT$(d,\mathbf{F})$ is generated by
extremal automorphisms, field automorphisms, diagonal
automorphisms, inner automorphisms and  central automorphisms.
\end{theorem}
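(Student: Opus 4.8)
The plan is to take an arbitrary automorphism $\phi$ of $\text{UT}(d,\mathbf{F})$ and, by composing it on one side with automorphisms of the advertised types (together with the flip automorphism, which appears in the reduction), whittle it down to the identity; since each of these families is a subgroup of the full automorphism group, this exhibits $\phi$ as a product of such maps. Essentially all the ingredients are already assembled: the two lemmas of this section, the classification of the maximal abelian normal subgroups, and the description of the exceptional subgroups $\mathbf{A}_2,\mathbf{A}_{d-1}$ containing $xe_{2,1}$ and $xe_{d,d-1}$.

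Concretely I would carry out the reduction in the order of the bulleted list preceding the statement. \textbf{Step 1.} By the first lemma of this section, together with the fact that $\mathbf{A}_2^\phi\in\{\mathbf{A}_2,\mathbf{A}_{d-1}\}$, after composing with the flip automorphism if necessary we may assume $\mathbf{A}_2^\phi=\mathbf{A}_2$, and hence $\mathbf{N}_{i+1,i}^\phi=\mathbf{N}_{i+1,i}$ for the interior indices $i=2,\dots,d-2$. \textbf{Step 2.} Composing with a suitable extremal automorphism, Equation~(\ref{aut1}) or~(\ref{aut2}) according to the characteristic, we arrange $\mathbf{N}_{i+1,i}^\phi=\mathbf{N}_{i+1,i}$ for all $i=1,\dots,d-1$, so that $\phi$ also respects the exceptional ideals. \textbf{Step 3.} Now $\phi$ induces a map on $\text{UT}(d,\mathbf{F})/\Gamma_2$ of the form $xe_{i+1,i}\mapsto x^{\lambda_i}e_{i+1,i}\bmod\Gamma_2$ with each $\lambda_i$ a bijection; the relation $xe_{i,j}\centerdot ye_{i,j}=(x+y)e_{i,j}$ forces $\lambda_i$ additive, the identities $[xe_{i+1,i},ye_{i,i-1}]=[ye_{i+1,i},xe_{i,i-1}]$ tie the $\lambda_i$ together as $\lambda_1=k_1\lambda_2=\dots=k_{d-2}\lambda_{d-1}$ with $k_i\in\mathbf{F}^{\times}$, and a further comparison on $\Gamma_2/\Gamma_3$ shows these maps are a single field automorphism up to those scalars; composing with the corresponding field automorphism and diagonal automorphism makes $\phi$ act as the identity on $\text{UT}(d,\mathbf{F})/\Gamma_2$. \textbf{Step 4.} By the second lemma of this section we compose with inner automorphisms so that $\phi$ acts as the identity modulo $\Gamma_{d-1}$. \textbf{Step 5.} For each generator we then have $xe_{i+1,i}^\phi=xe_{i+1,i}\centerdot x^{\mu_i}e_{d,1}$, and since $\Gamma_{d-1}$ is central, requiring compatibility with $xe_{i+1,i}\centerdot ye_{i+1,i}=(x+y)e_{i+1,i}$ shows each $\mu_i$ is linear; this is a central automorphism, and composing with its inverse leaves $\phi$ fixing every generator $xe_{i+1,i}$, hence $\phi=\mathrm{id}$. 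I would also record the remark (used implicitly) that the central automorphisms attached to $xe_{2,1}$ and $xe_{d,d-1}$ are inner, realised by conjugation by $1+ce_{d,2}$ and by $1+ce_{d-1,1}$ respectively, so nothing is lost in the final step.

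The main obstacle is Step 3, specifically the passage from \emph{``each $\lambda_i$ is additive and the $\lambda_i$ are proportional via the $k_i$''} to \emph{``they coincide, up to the scalars $k_i$, with one field automorphism of $\mathbf{F}$''}. This requires exploiting the multiplicative structure visible on $\Gamma_2/\Gamma_3$: one compares $[xe_{i+1,i},ye_{i,i-1}]^\phi$ with the action $\phi$ induces on $\mathbf{N}_{i+1,i-1}$ and uses bijectivity to force multiplicativity, then propagates the normalisation consistently. Keeping track of $\phi$ simultaneously on all the quotients $\Gamma_k/\Gamma_{k+1}$ while preserving the normalisations obtained in Steps 1 and 2 is the delicate bookkeeping; the remaining reductions are direct applications of the two lemmas of this section and of the relations~(\ref{grpeqn1})--(\ref{grpeqn2}).
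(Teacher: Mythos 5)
Your proposal follows essentially the same route as the paper: the same sequence of reductions (flip, extremal, field and diagonal, inner, central automorphisms) using the two lemmas of this section and the relations~(\ref{grpeqn1})--(\ref{grpeqn2}) to reduce an arbitrary automorphism to the identity. The multiplicativity issue you flag in Step~3 is likewise left largely implicit in the paper, so your treatment matches its level of detail.
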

\begin{acknowledgement}
Special thanks to I.B.S. Passi for his encouragement, reading the
whole manuscript and valuable comments.
\end{acknowledgement}
\nocite{*}
\bibliography{paper}
\bibliographystyle{amsplain}
\end{document}